\newtheorem{theorem}{Theorem}[section]
\newtheorem{lemma}[theorem]{Lemma}
\newtheorem{proposition}[theorem]{Proposition}
\newtheorem{corollary}[theorem]{Corollary}
\theoremstyle{definition}
\theoremstyle{remark}
\newtheorem{remark}[theorem]{Remark}
\numberwithin{equation}{section}
\newcommand{\R}{{\mathbb R}}
\newcommand{\G}{\frac{\alpha}{\Gamma(1-\alpha)}}
\newcommand{\C}{\mathcal{C}}
\author{Mark Allen}
\address{Department of Mathematics, Brigham Young University, Provo,
  UT 84602}
\email{allen@mathematics.byu.edu}
\subjclass[2010]{35R11,35K55}
\thanks{The author was supported by NSF grant DMS-1303632. Much of the work was completed while the author was at The University 
of Texas at Austin.}
\title{A nondivergence parabolic problem with a fractional time derivative}
\begin{document}

\maketitle
\makeatletter
\vspace{-2em}
{\centering\enddoc@text}
\let\enddoc@text\empty 
\makeatother
\begin{abstract}
 We study a nonlocal nonlinear parabolic problem with a fractional time derivative. We prove a Krylov-Safonov type result; mainly, we prove
 H\"older regularity of solutions. Our estimates remain uniform as the order of the fractional time derivative $\alpha \to 1$. 
\end{abstract}

\section{Introduction}
 This paper studies a nonlocal nondivergence nonlinear parabolic equation with a fractional time derivative. The specific equation we will study is
  \begin{equation}   \label{e:e1}
   _a^c\partial_t^{\alpha} u - I u = \ _a^c\partial_t^{\alpha} u - \sup_{i}\inf_{j}\left( \int_{\R^n}\frac{u(x+y)-u(x)}{|y|^{n+2\sigma}} a^{ij}(t,x,y) \right) = f
  \end{equation}
 where $i,j$ are two indexes ranging in arbitrary sets, and $_a^c \partial_t^{\alpha}$ represents the Caputo derivative. We assume  
  \begin{gather*}
    a^{ij}(t,x,y) = a^{ij}(t,x,-y) \\
    \lambda \leq a^{ij}(t,x,y) \leq \Lambda.
  \end{gather*}
 The symmetry condition assures that the integral is convergent as long as $u \in C^{1,\alpha}$. A similar parabolic problem involving the Caputo
 derivative was recently studied in \cite{acv16} where the nonlocal spatial operator was linear and of divergence type. The authors obtained a H\"older regularity result 
 using De Giorgi's method. Another fractional parabolic problem involving a local divergence operator was studied in \cite{z13}. 
  
 For a parabolic nonlocal nondivergence setting (with the classical time derivative) 
 a more complicated Hamilton-Jacobi type equation with $\sigma=1/2$ was studied in \cite{s11}. 
 For a local time derivative when $0<\sigma< 1$
 the problem was studied in \cite{cl141} and \cite{cl142}. In those papers the authors were able to prove the results uniformly as $\sigma \to 1$ and hence 
 generalizing the results in the local setting. In this paper our focus is on the fractional Caputo derivative. Our results remain uniform
 as $\alpha \to 1$. Since we are not focused on the results being uniform as $\sigma \to 1$ we follow the methods presented in \cite{s11}. 
 
 \subsection{Motivation}
  Nonlocal equations are effective in modeling phenomena in the physical sciences. Fractional diffusion operators can be used to model problems involving  Levy flights, 
  for instance the fractional kinetic equations in \cite{z02}. 
  Fractional kinetic equations are also derived from the viewpoint of random walks. 
  A fractional diffusion operator is used for a diverging jump length variance in the random walk, and a fractional time derivative is utilized 
  when the characteristic
  waiting time diverges, see \cite{mk00}. 
  A fractional time derivative is advantageous in physical models where the problem has ``memory''. Some nonlocal equations have mixed spatial and time derivatives such as
  the so called master equations in \cite{mk00}. Other models have separate nonlocal spatial and time derivatives such as the model we are studying.
  
  One motivation for the equation we study comes from modeling transport in plasma physics. In \cite{dcl04} and \cite{dcl05} the authors used the Caputo time derivative
  as well as a nonlocal spatial operator to model transport of tracer particles in plasma. The nonlocal spatial operator accounts for avalanche like
  transport that can occur. The Caputo time derivative accounts for the trapping effects of turbulent eddies. 
  
  Of interest is the regularity of solutions to these nonlocal equations. H\"older continuity for solutions to master equations was shown in \cite{cs14}. 
  H\"older continuity
  for equations of type \eqref{e:e1} with the usual local time derivative was shown in \cite{cl141}. H\"older continuity for parapoblic equations with a 
  fractional time derivative and divergence form nonlocal operator was shown in \cite{acv16}. For a nonlocal porous medium equation involving
  the Caputo derivative, H\"older continuity was proven in \cite{2acv16}. Our main result in this paper is to prove H\"older continuity for 
  solutions to \eqref{e:e1} under the appropriate assumptions (see Theorem \ref{t:holder}). It is important to note that the estimates remain uniform as 
  the order of the fractional derivative $\alpha \to 1$. 
  
 \subsection{The Caputo Derivative} 
  As previously mentioned the Caputo derivative is effective in modeling equations in which there is memory. One formulation of the Caputo derivative
  is 
   \[
    _a^c\partial_t^{\alpha} f(t) := \frac{1}{\Gamma(1-\alpha)} \int_{a}^t \frac{f'(s)}{(t-s)^{1+\alpha}} \ ds.
   \]
  For $C^1$ functions one may use integration by parts to show the equivalent formula
   \[
    _a^c\partial_t^{\alpha} f(t) = \frac{1}{\Gamma(1-\alpha)} \frac{f(t)-f(a)}{(t-a)^{\alpha}} + \G \int_a^t \frac{f(t)-f(s)}{(t-s)^{1+\alpha}} \ ds.
   \]
  If we define $f(t)=f(a)$ for $t<a$ as in \cite{acv16}, then we have the equivalent formulation 
   \begin{equation}  \label{e:fracform}
    \partial_t^{\alpha} f(t) := \G \int_{-\infty}^t \frac{f(t)-f(s)}{(t-s)^{1+\alpha}} \ ds.
   \end{equation}
  This one-sided nonlocal derivative was recently studied in \cite{bmst16}. The formulation in \eqref{e:fracform} is very useful. It is no longer essential to know, and therefore label, 
  the initial point $a$. (We will also
  omit the superscript $c$ and always assume that the fractional derivative is the Caputo derivative.) Another feature of this formulation in \eqref{e:fracform}
  is that  one may easily  
  assign more general ``initial'' data than just $u_0(x) = u(x,a)$ that need not be differentiable or Lipschitz. 
  One may assign initial data $g(x,t)$ for any $t<a$. The formulation in \eqref{e:fracform} is also suited for 
  viscosity solutions. This formulation in \eqref{e:fracform} looks similar to the one-dimensional fractional Laplacian 
  except that the integration occurs from only one side. 
  Another feature is the formula used in Lemma \ref{l:byparts} to treat the Caputo derivative of a product of functions. 
  Finally, this formulation in \eqref{e:fracform} allows for a different type of generalization of the Caputo derivative. Rather than generalizing as
   \[
    \frac{d}{dt} (k \ast (f-f(a))) \quad \text{ or } \int_a^t f'(s)K(t-s) \ ds
   \]
  as is commonly done, one may generalize as 
   \[
    \int_{-\infty}^t [f(t)-f(s)]K(t,s) \ ds. 
   \]
  In fact, the proof in \cite{acv16} for the linear divergence equation works for a general fractional time derivative of the form
   \begin{equation}   \label{e:gentime}
    \int_{-\infty}^t [f(t)-f(s)]K(t,s,x),
   \end{equation}
  provided that the kernel satisfies $K(t,t-s,x) = K(t+s,t,x)$ and
   \begin{equation}  \label{e:alphabound}
    \frac{\Lambda^{-1}}{(t-s)^{1+\alpha}} 
     \leq K(t,s,x)
     \leq \frac{\Lambda}{(t-s)^{1+\alpha}}.
   \end{equation}
  The first condition is utilized since the problem is of divergence form.

  What would be of interest is to prove 
  H\"older regularity to solutions of an equation of the form
   \begin{equation}  \label{e:future}
    \begin{aligned}
     &\int_{-\infty}^t [u(x,t)-u(x,s)]K_1(t,s,x) \\
     &\quad  - \sup_{i}\inf_{j}\left( \int_{\R^n}u(x+y,t)-u(x,t)K_2^{ij}(t,x,y) \right) = f(t,x) 
    \end{aligned}
   \end{equation}
  or even 
   \begin{equation}  \label{e:nontime}
    \begin{aligned}
     &\sup_{k}\inf_{l}\left( \int_{-\infty}^t [u(x,t)-u(x,s)]K_1^{kl}(t,s,x)  \right) \\
      & \quad - \sup_{i}\inf_{j}\left( \int_{\R^n}u(x+y,t)-u(x,t)K_2^{ij}(t,x,y) \right) = f(t,x)
    \end{aligned}
   \end{equation}
  assuming only the bounds from \eqref{e:alphabound} on $K_1^{kl}$ and the appropriate ellipticity bounds on $K_2^{ij}$. Of greatest interest would be to prove
  the results with uniform estimates as $\alpha \to 1$ and $\sigma \to 2$. The second equation seems to be a nonlinear nondivergence analogue for the fractional time 
  derivative. In a forthcoming paper \cite{a17} the author has shown H\"older continuity for solutions to \eqref{e:future} provided that $K_1(t,s,x) =K_1(t,s)$, so 
  that the kernel is independent of $x$ and also assuming $K(t,t-s)=K(t+s,t)$. 
   
    Another interesting question would be to study the necessary and sufficient conditions for existence to solutions of \eqref{e:e1} in
    a bounded domain, for instance $\Omega \times [-1,0]$. Since the equation \eqref{e:e1}
   is nonlocal in both space and time, the boundary conditions for a solution in $\Omega \times [-1,0]$ would be defined on 
   $(\Omega^c \times (-1,0))\cup(\Omega \times    (-\infty,-1))$. One could study the boundary conditions for even more general space-time domains.

 \subsection{Notation}
  We here define notation that will be consistent thoughout the paper. 
   \begin{itemize}
    \item $\partial_t^{\alpha}$ - the Caputo derivative as defined in \eqref{e:fracform}.
    \item $\alpha$ - will always denote the order of the fractional time (Caputo) derivative.
    \item $\sigma$ - will always denote the order of the nonlocal spatial operator. 
    \item $t,s$ - will always be variables reserved as time variables. 
    \item $M^{\pm}$ - Pucci's extremal operators as defined in Section \ref{s:viscosity}.
    \item $\lambda, \Lambda$ - Ellipticity constants for the nonlocal operator. 
    \item $Q_r(x_0,t_0)$ - the space-time cylinder $B_r(x_0) \times (t_0 - r^{2\sigma/\alpha} , t_0)$.
    \item $Q_r$ - the cylinder centered at the origin $Q_r(0,0)$. 
   \end{itemize}
   
 \subsection{Outline}
  The outline of our paper is as follows:
  In Section \ref{s:viscosity} we explain the notion of viscosity solution that will be used in the paper.
  In Section \ref{s:caputo} we prove a few results regarding the Caputo derivative that will be necessary throughout the paper. 
  In Section \ref{s:holder} we prove our main result; namely, that solutions are locally H\"older continuous. 
  In Section \ref{s:higher} we prove higher regularity in time for a specific linear equation.

\section{Viscosity Solutions and Pucci's Extremal Operators}  \label{s:viscosity}
 We recall the definition of Pucci's Extremal Operators for fractional nondivergence elliptic operators as introduced in \cite{cs09}. 
 We denote the second order
 difference $\delta(u,x,y):= u(x+y)+u(x-y)-2u(x)$. We assume for the two constants $0 < \lambda < \Lambda$. We define
  \[
   \begin{aligned}
    M^+ u(x) &:= \int_{\R^n} \frac{\Lambda \delta(u,x,y)_+ - \lambda\delta(u,x,y)_-}{|y|^{n+2\sigma}} \\
    M^- u(x) &:= \int_{\R^n} \frac{\lambda \delta(u,x,y)_+ - \Lambda\delta(u,x,y)_-}{|y|^{n+2\sigma}}.
   \end{aligned}
  \]
 These operators give rise to the equations
  \begin{alignat}{2}  
   &\partial_t^{\alpha} u(x,t) - M^+ u(x,t) &\leq f(x,t)   \label{e:p1} \\
   &\partial_t^{\alpha} u(x,t) - M^- u(x,t) &\geq f(x,t)   \label{e:p2} 
  \end{alignat}
 
 We now define a viscosity solution for our problem. 
 We say that a continuous function $u$ is a viscosity subsolution (supersolution) if whenever $\phi$ defined on
 the cylinder $\C:=[t_1,t_2] \times B_r(x_1)$ touches $u$ from above (below)
 at $(x_0,t_o) \in \C$ and we define 
  \[ 
   v(x,t) := 
    \begin{cases}
      \phi(x,t) & \text{if } (x,t) \in \C \\ 
      u(x,t) & \text{otherwise}.\\
    \end{cases}
  \]
 Then  
  \[
   \partial_t^{\alpha} v(x,t) - Iv(x,t) \leq (\geq) f(x,t).
  \]
 A solution is both a subsolution and supersolution. It follows from the definitions that if $u$ is a subsolution 
 (supersolution) to \eqref{e:e1}, then $u$ is a subsolution to \eqref{e:p1} (supersolution to \eqref{e:p2}). It is clear that if a function $u$ is a 
 solution to \eqref{e:e1} in the classical sense, then it is also a solution in the viscosity sense. 
 
 We remark that upper and lower semi-continuity are sufficient for the definitions of sub and super-solution. However, continuity 
 will suffice for the purposes of this paper. 
 
 The following Lemma will be useful in proving higher regularity in time in Section \ref{s:higher}.
 
  \begin{lemma}  \label{l:limit}
   Let $u_k$ be a sequence of continuous bounded viscosity solutions to \eqref{e:e1} in $\R^n \times (-\infty, T)$ converging to $u_0$ bounded and continuous. Then 
   $u_0$ is a viscosity solution to \eqref{e:e1} in $\R^n \times (-\infty, T)$.
  \end{lemma}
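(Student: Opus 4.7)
The plan is to verify the subsolution property for $u_0$; the supersolution case is symmetric. Let $\phi$ be a test function on a cylinder $\mathcal{C} = [t_1,t_2]\times B_r(x_1)$ touching $u_0$ from above at an interior point $(x_0,t_0)$. By adding a small perturbation $\epsilon(|x-x_0|^2 + (t-t_0)^2)$ and then letting $\epsilon\to 0$ at the very end, I may assume the contact is strict. The first step is then to produce a touching sequence for the $u_k$: since $u_k \to u_0$ uniformly and the contact of $\phi$ with $u_0$ is strict, the function $\phi - u_k$ attains its minimum over a small closed subcylinder around $(x_0,t_0)$ at an interior point $(x_k,t_k)$, with $(x_k,t_k) \to (x_0,t_0)$ and $c_k := (\phi-u_k)(x_k,t_k) \to 0$. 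The test function $\phi - c_k$ then touches $u_k$ from above at $(x_k,t_k)$, and the subsolution property for $u_k$ gives $\partial_t^{\alpha}v_k(x_k,t_k) - Iv_k(x_k,t_k) \leq f(x_k,t_k)$, where $v_k$ equals $\phi - c_k$ inside $\mathcal{C}$ and equals $u_k$ outside (and analogously define $v_0$ using $\phi$ and $u_0$).

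The main work is passing to the limit $k\to\infty$ in both nonlocal operators. For the Caputo derivative, split the integral defining $\partial_t^\alpha v_k(x_k,t_k)$ at $s=t_1$. On $(t_1,t_k)$ only $\phi$ appears, and the smoothness of $\phi$ gives $|\phi(x_k,t_k)-\phi(x_k,s)| \leq C(t_k-s)$ near the singularity, which is integrable against $(t_k-s)^{-1-\alpha}$; combined with continuity in $(x,t)$ this piece converges to the corresponding expression at $(x_0,t_0)$. On $(-\infty,t_1)$ the integrand involves $u_k(x_k,s)$, and the uniform bound $\sup_k \|u_k\|_\infty < \infty$ (coming from uniform convergence to a bounded limit) combined with the integrability of $(t-s)^{-1-\alpha}$ away from the diagonal allows dominated convergence. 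A parallel split at $\partial B_r(x_1)$ handles each spatial operator $L_{ij}v_k(x_k,t_k) := \int_{\R^n}\delta(v_k,x_k,y)a^{ij}(t_k,x_k,y)/(2|y|^{n+2\sigma})\,dy$: near $y=0$ the smoothness of $\phi$ yields the bound $|\delta(\phi,x_k,y)| \leq C|y|^2$, which is integrable against $|y|^{-n-2\sigma}$ for $\sigma<1$, while for $|y|$ bounded away from $0$ one again invokes the uniform $L^\infty$ bound on $u_k$.

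The principal obstacle is commuting the limit with the outer $\sup_i \inf_j$ in $I$. The ellipticity $\lambda \leq a^{ij} \leq \Lambda$ is what saves us: every integrand appearing in $L_{ij}v_k$ admits a dominating function that is independent of $i,j$, so the dominated convergence just described yields $L_{ij}v_k(x_k,t_k) \to L_{ij}v_0(x_0,t_0)$ with an error estimate \emph{uniform in $i,j$}. This uniform-in-$(i,j)$ convergence allows the $\sup\inf$ to pass through the limit, giving $Iv_k(x_k,t_k) \to Iv_0(x_0,t_0)$. Combined with the convergence of the Caputo piece and the continuity of $f$, the inequality $\partial_t^\alpha v_0(x_0,t_0) - Iv_0(x_0,t_0) \leq f(x_0,t_0)$ follows, proving that $u_0$ is a viscosity subsolution.
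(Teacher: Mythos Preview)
Your argument is correct and is precisely the standard stability argument the paper is pointing to when it writes ``straightforward from the definition of viscosity solutions (see \cite{cc95})''; the paper gives no details beyond that reference, and you have supplied them faithfully. The only point worth tightening is the mode of convergence: the paper's statement is vague, but your proof implicitly uses locally uniform convergence together with a uniform $L^\infty$ bound on the sequence, which is exactly what is needed for both the touching-point construction and the dominated convergence in the tails---you might state this hypothesis explicitly.
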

  
  \begin{proof}
   The proof is straightforward from the definition of viscosity solutions (see \cite{cc95}). 
  \end{proof}

\section{Caputo Derivative} \label{s:caputo}
 In this section we collect some results regarding the Caputo derivative which we will later use. 
 The following Proposition is an immediate consequence of the formulation of the Caputo derivative given in  \eqref{e:fracform}.
  \begin{proposition}   \label{p:min}
   For a fixed point $t_0$, if $u(t)\geq u(t_0)$ for all $t<t_0$, then 
    \[
     \partial_t^{\alpha} u(t_0) \leq 0.
    \]
   with equality if and only if $u(t)=u(t_0)$ for all $t<t_0$. 
  \end{proposition}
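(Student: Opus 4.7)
The plan is to substitute directly into the one-sided formulation \eqref{e:fracform} of the Caputo derivative and then simply inspect signs. Writing
\[
 \partial_t^{\alpha} u(t_0) = \frac{\alpha}{\Gamma(1-\alpha)} \int_{-\infty}^{t_0} \frac{u(t_0)-u(s)}{(t_0-s)^{1+\alpha}} \, ds,
\]
the constant $\alpha/\Gamma(1-\alpha)$ is strictly positive for $\alpha \in (0,1)$, the denominator $(t_0-s)^{1+\alpha}$ is positive on the interval of integration, and by hypothesis the numerator $u(t_0)-u(s) \leq 0$ for every $s<t_0$. Therefore the integrand is pointwise non-positive and the inequality $\partial_t^{\alpha} u(t_0) \leq 0$ follows immediately.

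For the equality characterization, one direction is immediate: if $u(s)=u(t_0)$ for every $s<t_0$, then the integrand vanishes identically and $\partial_t^{\alpha} u(t_0)=0$. For the converse, suppose $\partial_t^{\alpha} u(t_0)=0$. Since we have just shown the integrand is non-positive and its integral is zero, we conclude $u(t_0)-u(s)=0$ for almost every $s<t_0$, and the continuity of $u$ (which is standing throughout the paper, having been imposed in the definition of viscosity solutions) upgrades this to $u(s)=u(t_0)$ for every $s<t_0$.

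There is no real obstacle here; the proposition is an essentially immediate sign-check in the one-sided integral representation \eqref{e:fracform}. The only mildly subtle point is the promotion from almost-everywhere to everywhere equality in the sharpness statement, which is handled by continuity.
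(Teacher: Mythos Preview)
Your proof is correct and is exactly what the paper has in mind: the proposition is stated there as ``an immediate consequence of the formulation of the Caputo derivative given in \eqref{e:fracform},'' with no further argument. You have simply spelled out that sign check, and your remark on continuity to upgrade the a.e.\ equality is the natural way to make the ``only if'' direction precise.
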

 We will need a solution to a fractional ordinary differential equation
  in Lemma \ref{l:decrease}. This solution will help carry information over various time slices. 
  \begin{equation}  \label{e:ode}
   \begin{cases}
    \partial_t^{\alpha} u + C_1 u = f &\text{  in  } [a,T] \\
    u(a)=0. 
    \end{cases}
  \end{equation}
 An explicit solution of the above equation is given in \cite{d04} as 
  \begin{equation}  \label{e:classic}
     \alpha \int_a^t (t-s)^{\alpha -1} E_{\alpha}'(-C_1(t-s)^{\alpha})f(s) \ ds, 
  \end{equation}
 where  $E_{\alpha}$ are the Mittag-Leffler functions defined by
  \[
   E_{\alpha}(t):= \sum_{j=0}^{\infty} \frac{t^j}{\Gamma(j\alpha +1)}. 
  \]
 As $\alpha \to 1$ this approaches the well know formula
  \[
   u(t)=
   \int_a^t e^{-C_1(t-s)}f(s) \ ds.
  \]
 We will want a lower bound on the solution to \eqref{e:ode} depending on $f$. To begin with
 we obtain the following lower bound. 
  \begin{lemma}
   For $0<\alpha \leq 1$ we have $E_{\alpha}(t)>0$ and $E_{\alpha}'(t)>0$. Furthermore, 
    \begin{equation}  \label{e:lowbound}
     E_{\alpha}(t), E_{\alpha}'(t) \geq C \text{  for  } t \in [-2,2]
    \end{equation}
   with the constant $C>0$ depending on $\alpha$ and uniform as $\alpha \to 1$.
  \end{lemma}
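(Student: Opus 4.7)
The plan is to treat $t \geq 0$ and $t < 0$ separately, reduce the minimum on $[-2,2]$ to a single point via monotonicity, and then handle the uniformity as $\alpha \to 1$ by a simple continuity-in-$\alpha$ argument.

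For $t \geq 0$ everything is immediate from the defining series. Every term of $E_{\alpha}(t) = \sum_{j \geq 0} t^{j}/\Gamma(j\alpha+1)$ is nonnegative and the $j=0$ term equals $1$, so $E_{\alpha}(t) \geq 1$. Termwise differentiation gives $E_{\alpha}'(t) = \sum_{j \geq 1} j t^{j-1}/\Gamma(j\alpha+1)$, whose $j=1$ term alone yields $E_{\alpha}'(t) \geq 1/\Gamma(\alpha+1)$. Since $\Gamma(\alpha+1)$ is continuous in $\alpha$ with $\Gamma(2) = 1$, both lower bounds are uniform as $\alpha \to 1$.

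For $t = -s < 0$ the series alternates, so I would appeal to Pollard's classical theorem: for $0 < \alpha \leq 1$, the map $s \mapsto E_{\alpha}(-s)$ is completely monotone on $[0,\infty)$. This yields $E_{\alpha}(-s) > 0$ together with $(-1)^{k}(d/ds)^{k} E_{\alpha}(-s) \geq 0$, i.e., $E_{\alpha}^{(k)}(-s) \geq 0$ for all $k \geq 0$. In particular $E_{\alpha}' \geq 0$ and $E_{\alpha}'' \geq 0$ on $(-\infty, 0]$, so $E_{\alpha}$ is nondecreasing on $\R$ and $E_{\alpha}'$ is nondecreasing on $\R$ (the latter on $[0,\infty)$ is already visible directly from the series). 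Consequently the minima of $E_\alpha$ and $E_\alpha'$ over $[-2,2]$ are achieved at $t=-2$, so it suffices to bound $E_{\alpha}(-2)$ and $E_{\alpha}'(-2)$ from below uniformly as $\alpha \to 1$.

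The series for $E_{\alpha}(-2)$ and $E_{\alpha}'(-2)$ converge absolutely with tails dominated uniformly for $\alpha$ in a neighborhood of $1$ (Stirling applied to $\Gamma(j\alpha+1)$ handles the tails), and each individual term is continuous in $\alpha$. Dominated convergence therefore gives $E_{\alpha}(-2) \to e^{-2}$ and $E_{\alpha}'(-2) \to e^{-2}$ as $\alpha \to 1^{-}$, so both exceed, say, $e^{-2}/2$ once $\alpha$ is close enough to $1$. The main obstacle is the sign analysis on $(-\infty, 0]$: positivity and monotonicity are not visible from the alternating series, and the cleanest route is to invoke Pollard's complete-monotonicity theorem; a self-contained alternative via the Hankel-contour integral representation of $E_{\alpha}$ is possible but is unnecessarily heavy if one is willing to cite Pollard.
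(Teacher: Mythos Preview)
Your argument is correct but follows a genuinely different route from the paper. The paper exploits the fractional ODE $\partial_t^{\alpha} y = -\gamma y$ satisfied by $y(t)=E_\alpha(-\gamma t^\alpha)$ and proves monotonicity of $y$ by a comparison/contradiction argument based on Proposition~\ref{p:min} (the sign of the Caputo derivative at a minimum); positivity of $E_\alpha$ and $E_\alpha'$ on the negative axis then drops out. Your approach instead invokes Pollard's complete-monotonicity theorem for $s\mapsto E_\alpha(-s)$, which immediately delivers the needed signs of $E_\alpha$, $E_\alpha'$, $E_\alpha''$ on $(-\infty,0]$, and you finish with a dominated-convergence continuity argument for the uniformity at $\alpha=1$. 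The paper's proof is self-contained and stays within the Caputo-derivative toolkit already built in the section, which is thematically clean; your proof is shorter and in fact yields more (full complete monotonicity), at the cost of citing an external classical result. One small point worth spelling out in your write-up: complete monotonicity a priori gives only $E_\alpha'\geq 0$, and the strict inequality $E_\alpha'(t)>0$ follows either from the Bernstein representation with a non-degenerate measure or from analyticity of $E_\alpha$.
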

  
  \begin{remark}
   This Lemma need not be true for $\alpha >1$. For instance  $E_{2}(-t^2)= \cos(t)$.
  \end{remark}
  
  \begin{proof}
   From the power series representation of $E$ this is clear for $t \geq 0$. We recall from \cite{d04} that 
    $\partial_t^{\alpha}  E_{\alpha}(-\gamma t^{\alpha})=-\gamma E_{\alpha}(-\gamma t^\alpha)$. We relabel $y(t):= E_{\alpha}(-\gamma t^\alpha)$.
   Then 
    \begin{equation}  \label{e:mittag}
     \partial_t^{\alpha} y(t)= \frac{1}{\Gamma(1-\alpha)} \int_0^t \frac{y'(s)}{(t-s)^{\alpha}} ds = -\gamma y(t).
    \end{equation}
   Now from the power series representation $E_{\alpha}'(0) > 0$, and thus by continuity of the 
   derivative we have $E_{\alpha}'(t) >0$ for $- \delta \leq t$. Therefore, there exists a $\delta$ such that
   $y(t)$ is a nonincreasing function on 
   $(0,\delta)$. Suppose now by way of contradiction that $y(t)$ is not nondecreasing. Then by continuity of $y(t)$, there exists a point $t_2>\delta$ such that
    \[
     w(t) := \max\{y(t_2), y(t)\} 
    \]
   is a nonincreasing function, and there exists $t_1<t_2$ such that $y(t_1)<y(t_2)$.
   From Proposition \ref{p:min} it follows that $\partial_t^{\alpha} w(t) \geq - \gamma w(t)$. We now subtract $\partial_t^{\alpha} y(t)$
   from both sides and evaluate at $t_2$ to obtain
    \[
     \partial_t^{\alpha} [w-y](t_2) \geq -\gamma[w(t_2) -y(t_2)] =0. 
    \]
   However, since $y(t_1)<w(t_1)$ we have from Proposition \ref{p:min} that
    \[
     \partial_t^{\alpha} [w-y](t_2) < 0.
    \] 
   This is a contradiction, and so $y(t)$ is a nonincreasing function for $t\geq 0$. It is then immediate from \eqref{e:fracform} that $y(t)$ is not only 
   nonincreasing, it is also strictly decreasing for $t\geq 0$. 
   Then from the power series representation of $y(t)$ it is immediate that
   $y(t)>0$ and $y'(t)<0$ over $(-\infty, \infty)$. We then obtain that 
    \[ 
     E_{\alpha}(t), E_{\alpha}'(t) \geq C \text{  for  } t \in [-2,2]
    \]
   with the constant $C$ depending on $\alpha$. From the power series representation it follows that this
   constant $C$ is uniform as $\alpha \to 1$.  
  \end{proof}
   
  \begin{corollary}  \label{c:mubound}
   Let $m$ be a solution to $\partial_t^{\alpha}m = -C_1 m +c_0 f(t)$ with $m(-2)=0$,$f \geq 0$ and 
    \[
     \int_{-2}^{-1} f(t) \geq \mu. 
    \]
   Then 
    \begin{equation}  \label{e:expbound}
     m(t) \geq c_0 \mu \frac{\alpha}{2} E_{\alpha}'(-2C_1) \quad \text{for } -1\leq t \leq 0.
    \end{equation}
  \end{corollary}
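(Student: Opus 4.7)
\emph{Plan.} The approach is to plug the forcing $c_0 f$ into the explicit solution formula from \eqref{e:classic} and then estimate the resulting integrand from below on the subinterval $s \in [-2,-1]$ where the mass of $f$ is guaranteed by hypothesis.

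Starting from \eqref{e:classic} with $a = -2$,
\[
m(t) = c_0 \alpha \int_{-2}^{t} (t-s)^{\alpha-1} E_{\alpha}'\bigl(-C_1 (t-s)^{\alpha}\bigr) f(s)\, ds.
\]
The preceding lemma gives $E_{\alpha}' > 0$ on $\R$, so together with $f \geq 0$ the integrand is nonnegative and I may restrict the integration to $s \in [-2,-1]$:
\[
m(t) \geq c_0 \alpha \int_{-2}^{-1} (t-s)^{\alpha-1} E_{\alpha}'\bigl(-C_1 (t-s)^{\alpha}\bigr) f(s)\, ds.
\]

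Next I estimate the two factors uniformly on the restricted range. For $t \in [-1,0]$ and $s \in [-2,-1]$ one has $t-s \in [0,2]$, hence $(t-s)^{\alpha} \leq 2^{\alpha} \leq 2$ because $\alpha \leq 1$. Since $\alpha - 1 \leq 0$, this yields the pointwise bound $(t-s)^{\alpha-1} \geq 2^{\alpha-1} \geq 1/2$. Moreover $-C_1(t-s)^{\alpha} \in [-2C_1, 0]$. The crux is the bound $E_{\alpha}'\bigl(-C_1(t-s)^{\alpha}\bigr) \geq E_{\alpha}'(-2C_1)$, which amounts to showing that $E_{\alpha}'$ is nondecreasing on $(-\infty, 0]$. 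I expect to obtain this either by adapting the contradiction argument of the previous lemma to the function $E_{\alpha}'(-C_1 t^{\alpha})$ (using the $\partial_t^{\alpha}$-eigenfunction structure and Proposition \ref{p:min}), or else by invoking the classical complete monotonicity of $t \mapsto E_{\alpha}(-t)$ on $[0,\infty)$, which forces $E_{\alpha}^{(k)}(u) \geq 0$ for every $k$ and every $u \leq 0$, in particular $E_{\alpha}'' \geq 0$ on $(-\infty, 0]$.

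Granting this monotonicity and combining the three bounds with the hypothesis $\int_{-2}^{-1} f \geq \mu$,
\[
m(t) \geq c_0 \alpha \cdot \tfrac{1}{2} \cdot E_{\alpha}'(-2C_1) \int_{-2}^{-1} f(s)\, ds \geq \frac{c_0 \mu \alpha}{2}\, E_{\alpha}'(-2C_1),
\]
which is the desired estimate. The main obstacle is establishing the monotonicity of $E_{\alpha}'$ on the negative reals, since the preceding lemma gave a lower bound for $E_{\alpha}'$ only on the compact window $[-2,2]$ and did not address monotonicity of the derivative itself.
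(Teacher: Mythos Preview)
Your approach is essentially the paper's: plug into the representation \eqref{e:classic}, bound the kernel $(t-s)^{\alpha-1}E_{\alpha}'(-C_1(t-s)^{\alpha})$ from below by $\tfrac12 E_{\alpha}'(-2C_1)$, and use $\int f\geq\mu$. The only cosmetic difference is that the paper keeps the full integral over $[-2,t]$ before bounding (which is harmless since $f\geq0$ and $[-2,-1]\subset[-2,t]$), whereas you truncate to $[-2,-1]$ first.

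On the ``main obstacle'' you flag: the paper simply writes the inequality
\[
(t-s)^{\alpha-1}E_{\alpha}'\bigl(-C_1(t-s)^{\alpha}\bigr)\ \geq\ \tfrac12\,E_{\alpha}'(-2C_1)
\]
without further comment, so you are being more careful than the source. You are right that this step needs $E_{\alpha}'$ nondecreasing on $(-\infty,0]$, which the preceding lemma does not supply (it only gives positivity and a uniform lower bound on $[-2,2]$). Your second suggested fix is the clean one: Pollard's theorem that $x\mapsto E_{\alpha}(-x)$ is completely monotone on $[0,\infty)$ for $0<\alpha\leq1$ gives $E_{\alpha}''(u)\geq0$ for $u\leq0$, hence the required monotonicity of $E_{\alpha}'$. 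With that in hand your argument is complete and matches the paper's.
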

   
  \begin{proof}
   From the integral representation of $m(t)$ in \cite{d04} and the fact that $E_{\alpha}'(t)>0$, we have 
    \[
     \begin{aligned}
      m(t) &= \alpha \int_{-2}^t (t-s)^{\alpha-1} E_{\alpha}'(-C_1(t-s)^{\alpha})c_0 f(s)ds \\
           &\geq \frac{\alpha}{2}  E_{\alpha}'(-2C_1) \int_{-2}^{t} c_0 f(s) ds \\
           & \geq \frac{\alpha}{2}  E_{\alpha}'(-2C_1) c_0 \mu.
     \end{aligned}
    \]
  \end{proof}

 We will also need the following 
  \begin{proposition}  \label{p:nubound}
   Let $h(t)= \max{|t|^{\nu}-1,0}$. If $t_1\leq 0$ and $\nu <\alpha$ then 
    \[
     0 \geq \partial_t^{\alpha} h(t_1) \geq - c_{\alpha, \nu} 
    \]
   where $c_{\alpha,\nu}$ is a constant depending only on $\alpha$ and $\nu$ but remains uniform as $\alpha \to 1$.
  \end{proposition}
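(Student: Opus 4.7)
The plan is to establish the two inequalities separately. The upper bound $\partial_t^\alpha h(t_1) \leq 0$ is immediate from Proposition \ref{p:min}: whenever $s \leq t_1 \leq 0$ one has $|s| \geq |t_1|$, hence $|s|^\nu \geq |t_1|^\nu$, and taking positive parts preserves the inequality, so $h(s) \geq h(t_1)$ on $(-\infty, t_1]$.

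For the lower bound I would write
\[
-\partial_t^\alpha h(t_1) = \frac{\alpha}{\Gamma(1-\alpha)} \int_{-\infty}^{t_1} \frac{h(s)-h(t_1)}{(t_1-s)^{1+\alpha}}\, ds \geq 0,
\]
and bound the right-hand side by a constant. I would split into the cases $-1 \leq t_1 \leq 0$ and $t_1 \leq -1$. In the first case $h(t_1)=0$ and the integrand vanishes on $[-1,t_1]$; using $t_1 - s \geq |s| - 1$ for $s \leq -1$ and substituting $u = -s$ bounds the integral by
\[
J(\alpha,\nu) := \int_1^\infty \frac{\tau^\nu - 1}{(\tau-1)^{1+\alpha}}\, d\tau.
\]
In the second case the change of variables $s = -|t_1|\tau$ converts the integral into $|t_1|^{\nu - \alpha} J(\alpha,\nu)$, and $|t_1|^{\nu-\alpha}\leq 1$ since $|t_1|\geq 1$ and $\nu<\alpha$. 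So everything reduces to controlling $J(\alpha,\nu)$.

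To estimate $J$ I split at $\tau = 2$. On the tail, using $\tau - 1 \geq \tau/2$ gives
\[
\int_2^\infty \frac{\tau^\nu - 1}{(\tau-1)^{1+\alpha}}\, d\tau \leq 2^{1+\alpha}\int_2^\infty \tau^{\nu-1-\alpha}\, d\tau = \frac{2^{1+\nu}}{\alpha - \nu},
\]
which is finite because $\nu<\alpha$. On $[1,2]$, since $\nu < 1$, concavity of $\tau \mapsto \tau^\nu$ yields $\tau^\nu - 1 \leq \nu(\tau - 1)$, so
\[
\int_1^2 \frac{\tau^\nu - 1}{(\tau-1)^{1+\alpha}}\, d\tau \leq \nu \int_1^2 (\tau-1)^{-\alpha}\, d\tau = \frac{\nu}{1-\alpha}.
\]

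The main technical point, and the only place where uniformity is nontrivial, is that the $[1,2]$ bound blows up as $\alpha \to 1$. This is precisely compensated by the prefactor through the identity $(1-\alpha)\Gamma(1-\alpha) = \Gamma(2-\alpha)$: indeed
\[
\frac{\alpha}{\Gamma(1-\alpha)} \cdot \frac{\nu}{1-\alpha} = \frac{\alpha \nu}{\Gamma(2-\alpha)} \longrightarrow \nu \quad \text{as } \alpha \to 1,
\]
while the tail contribution $\frac{\alpha}{\Gamma(1-\alpha)} \cdot \frac{2^{1+\nu}}{\alpha-\nu}$ actually vanishes in the limit since $\Gamma(1-\alpha) \to \infty$. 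Combining the two estimates produces the desired bound $-\partial_t^\alpha h(t_1) \leq c_{\alpha,\nu}$ with $c_{\alpha,\nu}$ uniform as $\alpha \to 1$.
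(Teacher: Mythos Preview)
Your proof is correct. The upper bound via Proposition~\ref{p:min} is exactly right, and your integral estimates for the lower bound are clean: the scaling $s=-|t_1|\tau$ in the case $t_1\le -1$ and the inequality $t_1-s\ge |s|-1$ in the case $-1\le t_1\le 0$ both reduce to the single integral $J(\alpha,\nu)$, and your split at $\tau=2$ together with the identity $(1-\alpha)\Gamma(1-\alpha)=\Gamma(2-\alpha)$ correctly handles the uniformity as $\alpha\to 1$.

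The paper takes a slightly different route. Rather than bounding the integral directly for every $t_1$, it invokes a monotonicity result from \cite{acv16} to the effect that $\partial_t^\alpha h(t_1)\ge \partial_t^\alpha h(-1)$ for all $t_1\le 0$, reducing the problem to a single explicit evaluation at $t_1=-1$; it then uses the Caputo formula involving $h'$ and observes that $\partial_t^\alpha h(-1)\to -\nu$ as $\alpha\to 1$. Your scaling computation in fact reproves this reduction implicitly, since it shows $-\partial_t^\alpha h(t_1)=|t_1|^{\nu-\alpha}\cdot\frac{\alpha}{\Gamma(1-\alpha)}J(\alpha,\nu)\le \frac{\alpha}{\Gamma(1-\alpha)}J(\alpha,\nu)=-\partial_t^\alpha h(-1)$ for $t_1\le -1$, and similarly for $-1\le t_1\le 0$. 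The paper's version is shorter by outsourcing the reduction; yours is self-contained and gives an explicit bound $c_{\alpha,\nu}$ rather than just its limiting value.
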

 
  \begin{proof}
   In \cite{acv16} it is shown that 
    \[
     0 \geq \partial_t^{\alpha} h(t) \geq \partial_t^{\alpha} h(-1) \geq c_{\alpha,\nu}.
    \]
   Now 
    \[
     \partial_t^{\alpha} h(-1) 
      = \frac{1}{\Gamma(1-\alpha)} \int_{-\infty}^{-1} \frac{-\nu |s|^{\nu-1}}{(t-s)^{\alpha}} ds \to -\nu \quad \text{as} \quad \alpha \to 1.
    \]
  \end{proof}

\section{H\"older Continuity}  \label{s:holder}
 In this section we follow the method used in \cite{s11} to prove our main result. 
  \begin{lemma}  \label{l:decrease}
   Let $u$ be a continuous function, $u\leq 1$ in $(\R^n \times [-2,0])\cup (B_2 \times [-\infty,0])$, which satisfies 
   the following inequality in the viscosity sense in $B_2 \times [-2,0]$
    \[
     \partial_t^{\alpha} u - M^+ u \leq \epsilon_0.
    \]
   Assume also that  
    \[
     | \{u\leq 0\} \cap (B_1 \times [-2,-1])| \geq \mu.
    \]
   Then if $\epsilon_0$ is small enough there exists $\theta>0$ such that $u \leq 1-\theta$ in $B_1 \times [-1,0]$. 
   The maximum value of $\epsilon_0$ as well as $\theta$ depend on $\alpha, \lambda, \Lambda,n$ and $\sigma$ but remain uniform as $\alpha \to 1$. 
  \end{lemma}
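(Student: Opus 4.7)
The plan is to construct an upper barrier $\Phi(x,t)$ for $u$ and to conclude by viscosity comparison, in the spirit of Silvestre \cite{s11}. First, encode the measure hypothesis into an auxiliary fractional ODE: let $g(t):=|\{u(\cdot,t)\le 0\}\cap B_1|$, so that $\int_{-2}^{-1}g(t)\,dt\ge\mu$, and let $m(t)$ solve
\[
  \partial_t^{\alpha} m(t) + C_1 m(t) = c_0\,g(t),\qquad m(t)\equiv 0 \text{ for } t\le -2,
\]
for constants $c_0,C_1>0$ to be fixed below. Corollary \ref{c:mubound} then yields $m(t)\ge \theta_0:=c_0\mu\tfrac{\alpha}{2}E_\alpha'(-2C_1)>0$ on $[-1,0]$ uniformly as $\alpha\to 1$, and the bound $m\le 1/2$ can be arranged by taking $c_0$ small.

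Next, fix a smooth cutoff $\eta\in C_c^\infty(B_{3/2})$ with $\eta\equiv 1$ on $B_1$, $0\le\eta\le 1$, and a spatial tail $b(x):=K(|x|^\nu-1)_+$ with $\nu<2\sigma$ (so that $\int_{|y|\ge 1}|y|^{\nu-n-2\sigma}dy<\infty$) and $K>0$ to be chosen. Consider the candidate barrier
\[
  \Phi(x,t)\;:=\;1 - m(t)\eta(x) + b(x).
\]
Off $B_{3/2}\times[-2,0]$ the inequality $u\le\Phi$ is automatic: outside $B_{3/2}$ we have $\eta=0$ and $b\ge 0$, while for $t\le -2$ we have $m\equiv 0$, so in both cases $\Phi\ge 1\ge u$ by the hypothesis. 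In the transition annulus $B_{3/2}\setminus B_1$ one chooses $K$ large enough that $b\ge m\eta\le 1/2$, again giving $\Phi\ge 1\ge u$. If we can upgrade these to $u\le\Phi$ on $B_1\times[-1,0]$, then $u\le 1-m(t)\le 1-\theta_0$ and the lemma follows with $\theta:=\theta_0$.

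To establish $u\le\Phi$, suppose toward contradiction that $w:=u-\Phi$ attains a positive maximum at some $(x_0,t_0)\in B_1\times(-2,0]$. The viscosity subsolution definition applied with the test function $\Phi+w(x_0,t_0)$ touching $u$ from above at $(x_0,t_0)$ gives
\[
  \partial_t^{\alpha}\Phi(x_0,t_0) - M^+\Phi(x_0,t_0)\;\le\;\epsilon_0.
\]
Since $x_0\in B_1$, $\eta(x_0)=1$ and $b(x_0)=0$, so $\partial_t^{\alpha}\Phi(x_0,t_0) = C_1m(t_0)-c_0g(t_0)$. The identity $\delta(\Phi,x_0,y)=-m(t_0)\delta(\eta,x_0,y)+\delta(b,x_0,y)$, together with the integrability afforded by $\nu<2\sigma$, yields the pointwise bound $M^+\Phi(x_0,t_0)\le C_\eta\Lambda\,m(t_0)+C_{\nu,\sigma}K\Lambda$. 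Hence the contradiction will follow once we establish
\[
  (C_1-C_\eta\Lambda)m(t_0)\;>\;c_0g(t_0)+C_{\nu,\sigma}K\Lambda+\epsilon_0,
\]
which for $t_0\in[-1,0]$ is achieved by first fixing $C_1$ large relative to $C_\eta\Lambda$ (and relative to $|B_1|/\mu$, to absorb $c_0 g$), and then taking $K,\epsilon_0$ small.

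The main obstacle is the case $t_0\in(-2,-1]$, where the Mittag--Leffler bound on $m(t_0)$ can degenerate. The resolution is a case split: if $u(x_0,t_0)\le 0$ the positive-maximum hypothesis fails since $\Phi(x_0,t_0)\ge 1-m(t_0)\ge 1/2>u(x_0,t_0)$; on the complementary bad set one perturbs $\Phi$ by a small additional term modeled on Proposition \ref{p:nubound} (for instance $-\kappa\max(|t+1|^\nu-1,0)$ type) which strictly boosts $\partial_t^{\alpha}\Phi$ on $(-2,-1]$ without altering the comparison on the exterior or on $[-1,0]$. Calibrating $c_0,C_1,K,\nu,\kappa,\epsilon_0$ in the correct order, and invoking the uniformity-in-$\alpha$ statements already built into Corollary \ref{c:mubound} and Proposition \ref{p:nubound}, delivers the conclusion with $\theta$ and the threshold $\epsilon_0$ uniform as $\alpha\to 1$.
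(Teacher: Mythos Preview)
Your argument has a genuine gap at its core. At the contact point $(x_0,t_0)$ you invoke the viscosity inequality and then bound the spatial term using only the smoothness of the barrier $\Phi$, writing $M^+\Phi(x_0,t_0)\le C_\eta\Lambda\,m(t_0)+C_{\nu,\sigma}K\Lambda$. This discards the essential mechanism of the argument. For a nonlocal operator the viscosity definition evaluates $M^+$ on the hybrid function $v$ (equal to the test function in a tiny ball $B_r(x_0)$ and equal to $u$ outside), and the crucial point is that $v=u\le 0$ on the good slice $G=\{x\in B_1:u(x,t_0)\le 0\}$. Each $y$ with $x_0+y\in G\setminus B_r$ contributes a second difference $\delta(v,x_0,y)\le -u(x_0,t_0)<-(1-m(t_0))$, which produces an extra term $-c_0|G\setminus B_r|$ in the upper bound for $M^+v$; this is exactly the estimate \eqref{e:silv} in the paper, and it is what cancels the $c_0 g(t_0)=c_0|G|$ coming from the fractional ODE for $m$. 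Without that cancellation your target inequality $(C_1-C_\eta\Lambda)m(t_0)>c_0 g(t_0)+C_{\nu,\sigma}K\Lambda+\epsilon_0$ is unattainable: since $m(t_0)\ge c_0\mu\tfrac{\alpha}{2}E_\alpha'(-2C_1)$ while $g(t_0)$ may equal $|B_1|$, you would need $(C_1-C_\eta\Lambda)\,\mu\,\tfrac{\alpha}{2}E_\alpha'(-2C_1)>|B_1|$, but the left side tends to $0$ as $C_1\to\infty$ (for $\alpha=1$ it is $C_1 e^{-2C_1}\mu/2$, whose maximum over $C_1$ is $\mu/(4e)<|B_1|$), so no admissible choice of constants closes the loop.

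There are secondary issues as well. Your claim that $\Phi\ge 1$ on the annulus $B_{3/2}\setminus B_1$ fails near $|x|=1$, where $b(x)=K(|x|^\nu-1)_+=0$ while $m(t)\eta(x)>0$; hence the maximum of $u-\Phi$ is not forced into $B_1$, and the tension ``$K$ large for the annulus'' versus ``$K$ small for the $M^+$ bound'' cannot be resolved. Your handling of $t_0\in(-2,-1]$ by an ad hoc time perturbation is also unnecessary once the good-set cancellation is in place: the paper's argument yields a contradiction for every $t_0\in(-2,0]$ without any lower bound on $m(t_0)$. The correct route is the one in the paper: set $w(x,t)=u(x,t)+m_2(t)b(x)-\epsilon_0 c_\alpha(2+t)_+^\alpha$ with a smooth bump $b$ supported in $\overline{B_2}$, locate the global maximum of $w$ (necessarily in $B_2\times(-2,0]$), evaluate $M^+v$ using both the bump and the good set $G$ to obtain \eqref{e:silv}, and then split into the two cases $b(x_0)\le\beta_1$ (where $M^-b\ge 0$) and $b(x_0)>\beta_1$ to reach the contradiction by choosing $C_1$ large.
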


  \begin{proof}
   We consider the fractional ordinary differential equation $m_1: [-2,0] \to \R$
     \[
     \begin{aligned}
      m_1(-2)&=0 \\
      \partial_t^{\alpha} m_1(t) &= c_0 |\{x \in B_1: u(x,t)\leq 0\}| - C_1m(t)
     \end{aligned}
     \]
   From the hypothesis and Corollary \ref{c:mubound} we have
    \[
     m_1(t) \geq c_0 \mu \frac{\alpha}{2} E_{\alpha}'(-2C_1) \quad \text{for } -1\leq t \leq 0.
    \]
   Since the right hand side is not necessarily continuous, in order to ensure that we can evaluate $\partial_t^{\alpha}$ classically at every point 
   we approximate $m_1(t)$ uniformly from below by Lipschitz subsolutions as in \cite{a17}. There exists 
   $m_2:[-2,0] \to \R$ with $m_2\geq0$ and 
   \begin{equation}  \label{e:fode}
     \begin{aligned}
      m_2(-2)&=0 \\
      \partial_t^{\alpha} m_2(t) &\leq c_0 |\{x \in B_1: u(x,t)\leq 0\}| - C_1m_2(t)
     \end{aligned}
    \end{equation}
    and 
     \[
     m_2(t) \geq c_0 \mu \frac{\alpha}{4} E_{\alpha}'(-2C_1) \quad \text{for } -1\leq t \leq 0.
    \]
   We want to show that $u \leq 1- m_2(t) + \epsilon_0 c_{\alpha}2^{\alpha}$ if $c_0$ is small and $C_1$ is large. We can then set 
    $\theta = c_0 \mu \alpha E_{\alpha}'(-2C_1)/8$ for $\epsilon_0$ small to obtain the result of the Lemma. 
   We pick the constant $c_{\alpha}$ such that $\partial_t^{\alpha} c_{\alpha}(2+t)_+^{\alpha} = 1$ for $t>-2$. The constant $c_{\alpha}$ is uniform as $\alpha \to 1$ 
    (see\cite{d04}). 
   Let $\beta:\R \to \R$ be a fixed smooth nonincreasing function such that $\beta(x)=1$ if $x\leq 1$ and $\beta(x)=0$ if $x\geq 2$. 
   Let $b(x)= \beta(|x|)$. Where $b=0$ we have $M^- b>0$. Since $b$ is smooth $M^-b$ is continuous and it remains positive for $b$ small enough (\cite{s11}).
   Thus there exists $\beta_1$ such that $M^- b \geq 0$ if $b \leq \beta_1$.
   
   Assume that there exists some point $(x,t) \in B_1 \times [-1,0]$ such that 
    \[
     u(x,t) > 1 -m_2(t)+\epsilon_0 c_{\alpha} (2+t)_+^{\alpha}. 
    \]
   We will arrive at a contradiction
   by looking at the maximum of the function
    \[
     w(x,t) =u(x,t) +m_2(t)b(x) - \epsilon_0 c_{\alpha}(2+t)_+^{\alpha}
    \]
   We assume there exists a point in $B_1 \times [-1,0]$ where $w(x,t)>1$. Let $(x_0,t_0)$ be the point that realizes the maximum of $w$:
    \[
     w(x_0,t_0) = \max_{\R^n \times (-\infty,0]}w(x,t).
    \]  
   This maximum is larger than 1, and so it must be achieved when $t> -2$ and $|x|<2$. 
   
   Let $\phi(x,t):= w(x_0,t_0)-m_2(t)b(x)+c_{\alpha}(2+t)_+^{\alpha}$. As before we define $\phi(x,t)=\phi(x,-2)$ for 
   $t\leq -2$. The function $\phi$ touches $u$ from above at the point $(x_0,t_0)$. 
   We define
    \[
     v(x,t) := 
              \begin{cases}
                 \phi(x,t) & \text{if } x \in B_r  \\ 
                    u(x,t) & \text{if } x \notin B_r.\\
    \end{cases}
    \] 
   Then from the definition of viscosity solution we have
    \begin{equation}  \label{e:viscos}
     \partial_t^{\alpha}v - M^+v \leq \epsilon_0 \quad \text{ at } \quad (x_0,t_0). 
    \end{equation}
   We also have that 
    \begin{equation}  \label{e:capu}
     \partial_t^{\alpha} v(x_0,t_0) = -\partial_t^{\alpha} m_2(t_0)b(x_0) +\epsilon_0
    \end{equation}
   Now exactly as in \cite{s11} we obtain the following bound for $G := \{x \in B_1 \mid u(x,t_0)\leq 0 \}$,
    \begin{equation}  \label{e:silv}
     M^+ v(x_0,t_0) \leq -m_2(t_0) M^-b(x_0,t_0) -c_0|G \setminus B_r|
    \end{equation}
   for some universal constant $c_0$. This is how we choose $c_0$ in the fractional ordinary differntial equation. 
   We now look at two different cases and obtain a contradiction in both. Suppose $b(x_0,t_0)\leq\beta_1$. Then $M^-b(x_0,t_0)\geq0$, and so
   from \eqref{e:silv} we have
    \[
     M^+v(x_0,t_0)\leq -c_0|G\setminus B_r|. 
    \]
   Combining this with \eqref{e:viscos},\eqref{e:capu}, and \eqref{e:fode} we obtain
    \[
     0 \geq \left(-c_0 |\{x \in B_1: u(x,t)\leq 0\}| + C_1m_2(t)\right)b(x_0)  + c_0|G\setminus B_r|.
    \]
   For any $C_1>0$ this will be a contradiction by taking $r$ small enough. 
   
   Now suppose $b(x_0)>\beta_1$. Since $b$ is a smooth compactly supported function, there exists $C$ such that $|M^-b|\leq C$. We then have 
   the bound from \eqref{e:silv} that
    \[
     M^+ v(x_0,t_0) \leq Cm_2(t_0) +c_0|G \setminus B_r|
    \]  
   and inserting this in \eqref{e:viscos} with \eqref{e:capu} and \eqref{e:fode}we have
    \[
     0 \geq \left(-c_0 |\{x \in B_1: u(x,t)\leq 0\}| + C_1m_2(t)\right)b(x_0)   - Cm_2(t_0)+c_0|G \setminus B_r|
    \]
   Letting $r \to 0$ we obtain 
    \[
     \begin{aligned}
     0  &\geq c_0(1-b(x_0))|G| + (C_1b(x_0)-C)m_2(t_0) \\
        &\geq c_0(1-b(x_0))|G| + (C_1\beta_1-C)m_2(t_0).
     \end{aligned}
    \]
   Choosing $C_1$ large enough we obtain a contradiction. 
  \end{proof}

 We now define
  \[
   Q_r := B_r \times [-r^{2\sigma/\alpha},0]
  \]

 \begin{lemma}  \label{l:down}
  Let $u$ be bounded and continuous on $Q_1$ and satisfying 
   \begin{equation}
    \begin{aligned}
     &\partial_t^{\alpha} u - M^+ u \leq \epsilon_0/2, \\
     &\partial_t^{\alpha} u - M^+ u \geq -\epsilon_0/2.
    \end{aligned}
   \end{equation}
  in the viscosity sense in $Q_1$. 
  Then there are univeral constants $\theta>0$ and $\nu>0$ depending only on $n,\sigma,\Lambda,\lambda,\alpha$, but uniform as $\alpha \to 1$
  such that if 
   \[
    \begin{aligned}
     |u| \leq 1                     &\quad \text{in} \quad B_1 \times [-1,0] \\
     |u(x,t)| \leq 2|4x|^{\nu}-1 &\quad \text{in} \quad (\R^n\setminus B_1) \times [-1,0] \\
     |u(x,t)| \leq 2|4t|^{\nu}-1 &\quad \text{in} \quad B_1 \times (-\infty,-1] 
    \end{aligned}   
   \]
  then 
   \[
    \text{osc}_{Q_{1/4}} u \leq 1-\theta
   \]
 \end{lemma}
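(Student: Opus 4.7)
The plan is to reduce Lemma \ref{l:down} to Lemma \ref{l:decrease} by dichotomy and rescaling, with the polynomial tail growth of $u$ handled by a truncation whose error is absorbed into the universal threshold $\epsilon_0$ of Lemma \ref{l:decrease}.

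\emph{Dichotomy and rescaling.} Consider the subcylinder $\K := B_{1/4} \times [-2\cdot 4^{-2\sigma/\alpha},\, -4^{-2\sigma/\alpha}]$, which is contained in $B_1 \times [-1,0]$ for $\sigma,\alpha$ in the relevant range (uniformly as $\alpha \to 1$). Either $|\{u \le 0\} \cap \K| \ge |\K|/2$ or $|\{u \ge 0\} \cap \K| \ge |\K|/2$; by the symmetry under $u \mapsto -u$ (which exchanges $M^+$ and $M^-$) I assume the former. Rescale by $v(x,t) := u(x/4, t/4^{2\sigma/\alpha})$; scale invariance of the Caputo derivative and of the Pucci operators gives that $v$ satisfies the same inequalities with right-hand side $\pm\epsilon_0/2$. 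Moreover $|v| \le 1$ on $B_2 \times [-2,0]$, $|\{v\le 0\} \cap (B_1 \times [-2,-1])| \ge \mu_0 > 0$ universally, and the growth hypotheses translate to $|v(x,t)| \le 2|x|^\nu - 1$ for $|x|\ge 4$, with an analogous time tail for $t \le -4^{2\sigma/\alpha}$.

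\emph{Truncation and tail estimate.} Define $\bar v := \min(v,1)$, so $\bar v \le 1$ globally and $\bar v \equiv v$ on $B_2 \times [-2,0]$. I claim $\bar v$ is a viscosity subsolution of $\partial_t^{\alpha} \bar v - M^+ \bar v \le \epsilon_0^L$ on $B_2 \times [-2,0]$, where $\epsilon_0^L$ is the universal threshold of Lemma \ref{l:decrease}. Given a test function $\phi$ touching $\bar v$ from above at $(x_0,t_0)$ in a cylinder $\C$, the identity $\bar v = v$ on $\C$ makes $\phi$ touch $v$ from above as well; letting $w$ and $\tilde v$ denote the extensions of $\phi$ by $v$ and by $\bar v$ respectively, one has $\tilde v \le w$ with equality on $\C$, and a direct calculation yields
\[
\partial_t^{\alpha} \tilde v(x_0,t_0) - M^+ \tilde v(x_0,t_0) \le \partial_t^{\alpha} w(x_0,t_0) - M^+ w(x_0,t_0) + E_1 + E_2 \le \frac{\epsilon_0}{2} + E_1 + E_2,
\]
where $E_1 \ge 0$ is the time-tail error from $(v(x_0,s) - 1)_+$ integrated against $(t_0-s)^{-1-\alpha}$ below the base of $\C$, and $E_2 \ge 0$ the spatial analogue from $(v(x_0 \pm y,t_0) - 1)_+$ against $|y|^{-n-2\sigma}$ outside the spatial base of $\C$. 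Using $r^\nu - 1 \lesssim \nu \log r$ for $r>1$ together with the integrability conditions $\nu < \alpha$ and $\nu < 2\sigma$, both errors satisfy $E_1, E_2 = O(\nu)$ uniformly as $\alpha \to 1$. A small enough universal choice of $\nu$ thus makes $E_1 + E_2 \le \epsilon_0^L/2$, and since the smallness of $\epsilon_0$ in the hypothesis can be taken as $\epsilon_0 \le \epsilon_0^L/2$, the claim follows.

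\emph{Conclusion.} Lemma \ref{l:decrease} applied to $\bar v$ gives $\bar v \le 1 - \theta$ on $B_1 \times [-1,0]$. Since $\bar v \equiv v$ there (as $|v| \le 1$ on this cylinder), one obtains $v \le 1 - \theta$, equivalently $u \le 1 - \theta$ on $Q_{1/4}$; combining with the lower bound $u \ge -1$ on $Q_{1/4}$ from $|u| \le 1$ yields the claimed oscillation decay.

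\emph{Main obstacle.} The delicate step is the tail estimate in the truncation: the downward truncation $\bar v = \min(v,1)$ does not in general preserve subsolutions of $M^+$, since truncating down decreases $M^+$ and can violate the Pucci inequality. The argument must therefore be carried out at the viscosity level by comparing the two extensions $\tilde v$ and $w$ of the same test function and tracking the sign of each error. Fortunately both $E_1$ and $E_2$ enter with positive sign, making the bound $\epsilon_0/2 + E_1 + E_2$ available; the explicit $O(\nu)$ estimate, which relies on $\nu < \min(\alpha, 2\sigma)$ for convergence of the tail integrals together with a universally small choice of $\nu$ for absolute smallness, is what allows absorption into $\epsilon_0^L/2$.
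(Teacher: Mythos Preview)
Your proposal is correct and follows essentially the same route as the paper: rescale by the factor $4$, perform the dichotomy on the rescaled cylinder, truncate $v$ at level $1$, and control the resulting tail errors so that Lemma~\ref{l:decrease} applies. The paper handles the spatial tail by citing \cite{s11} and the time tail via Proposition~\ref{p:nubound}, whereas you estimate both directly; your heuristic $r^{\nu}-1 \lesssim \nu\log r$ is imprecise for unbounded $r$, but the conclusion $E_1,E_2 = O(\nu)$ is correct (e.g.\ compute $\int_1^{\infty}(s^{\nu}-1)s^{-1-\alpha}\,ds = \nu/[\alpha(\alpha-\nu)]$).
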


 \begin{proof}
  We consider the rescaled version 
   \[
    \tilde{u}(x,t) := u(4x,4^{2\sigma/\alpha} t). 
   \]
  The function $\tilde{u}$ will stay either positive or negative in half of the points in $B_1 \times [-2,-1]$. Let us assume that 
   $\{\tilde{u} \leq 0\}\cap (B_1 \times [-2,-1])\geq |B_1|/2$. Otherwise we can repeat the proof for $-\tilde{u}$. We would like to apply Lemma \ref{l:decrease}.
   To do so we would need $\tilde{u}\leq 1$. We consider $v:= \min\{1,\tilde{u}\}$. Inside $Q_{4}$ we have $v=\tilde{u}$. The error comes only from the tails in 
   the computations. Exactly as in \cite{s11} we obtain for $\kappa$ small enough
    \[
     -M^+ v \leq -M^+ \tilde{u} + \epsilon_0/4.
    \]
   From the Proposition \ref{p:nubound} we have for small enough $\kappa$
    \[
     \partial_t^{\alpha} v \leq \partial_t^{\alpha} \tilde{u} + \epsilon_0/4.
    \]
   Thus 
    \[
     \partial_t^{\alpha} v - M^+ v \leq \epsilon_0.
    \]
   We now apply Lemma \ref{l:decrease} to $v$ to conlude the proof. 
 \end{proof}

 \begin{theorem}   \label{t:holder}
  Let $u$ be bounded and continuous on $B_2 \times [-2,0]$ as well as a solution to \eqref{e:e1} in $B_2 \times [-2,0]$. Assume also that  
   \[
    \begin{aligned}
     |u| \leq M                     &\quad \text{in} \quad B_2 \times [-2,0] \\
     |u(x,t)| \leq M|4x|^{\nu}-1 &\quad \text{in} \quad (\R^n\setminus B_2) \times [-2,0] \\
     |u(x,t)| \leq M|4t|^{\nu}-1 &\quad \text{in} \quad B_2 \times (-\infty,-2]. 
    \end{aligned}   
   \]
  Then there are two constants  $C,\kappa>0$ depending only on $\Lambda,\lambda, n,\alpha,\sigma$, but uniform as $\alpha \to 1$
  such that for every $t>0$, $u$ is H\"older continuous and we have the estimate for 
   \[
    |u(x,t)-u(y,s)| \leq C (M + \epsilon_0^{-1} \| f\|_{L^{\infty}})\frac{|x-y|^{\kappa}+|t-s|^{\kappa \alpha/(2\sigma)}}{t^{\kappa}}.
   \] 
 \end{theorem}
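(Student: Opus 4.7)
The plan is to prove Theorem \ref{t:holder} by the standard iteration of the oscillation decay estimate provided by Lemma \ref{l:down}. First I would reduce to the normalized setting: by replacing $u$ with $u / (M + \epsilon_0^{-1}\|f\|_\infty)$, we may assume $M \leq 1$ and $\|f\|_{L^\infty} \leq \epsilon_0/2$. Since Pucci's operators and the Caputo derivative are both linear (and $1$-homogeneous), $u$ still satisfies the two Pucci-type inequalities with right-hand side bounded by $\epsilon_0/2$.

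The heart of the argument is an inductive construction of a sequence of constants $a_k \leq b_k$ with $b_k - a_k = 2(1-\theta)^k$ such that
\begin{equation*}
a_k \leq u(x,t) \leq b_k \quad \text{in } Q_{4^{-k}}.
\end{equation*}
At step $k$, I would rescale to the unit cylinder by setting
\begin{equation*}
v_k(x,t) = \frac{2}{b_k - a_k}\Bigl(u\bigl(4^{-k}x,\; 4^{-k\cdot 2\sigma/\alpha} t\bigr) - \tfrac{a_k+b_k}{2}\Bigr).
\end{equation*}
The Caputo derivative and the nonlocal spatial operator rescale in a compatible way (both pick up a factor of $4^{-k\cdot 2\sigma}$), so $v_k$ satisfies the same Pucci-type inequalities, with right-hand side still bounded by $\epsilon_0/2$ (this is where scaling plays in our favor, since the right-hand side only gets smaller). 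By the induction hypothesis, $|v_k| \leq 1$ on $B_1 \times [-1,0]$, and for the points of $B_1 \times [-2,-1]$ at least half either lie in $\{v_k \leq 0\}$ or $\{v_k \geq 0\}$; applying Lemma \ref{l:down} (or its sign-reversed version) gives $\mathrm{osc}_{Q_{1/4}} v_k \leq 1-\theta$, which translates into the existence of $a_{k+1},b_{k+1}$ completing the induction.

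The main obstacle, and the step requiring care, is verifying that $v_k$ satisfies the growth conditions
\begin{equation*}
|v_k(x,t)| \leq 2|4x|^\nu - 1 \text{ outside } B_1,\qquad |v_k(x,t)| \leq 2|4t|^\nu - 1 \text{ for } t \leq -1,
\end{equation*}
needed by Lemma \ref{l:down}. For $|x| \geq 2$ (respectively $t \leq -2$) we use the original hypothesis; in the intermediate annuli $4^{-(j+1)} < |4^{-k}x| \leq 4^{-j}$ with $j < k$, we use the inductive oscillation bound on $Q_{4^{-j}}$. The requirement becomes something like
\begin{equation*}
\frac{2(1-\theta)^j}{(1-\theta)^k} \leq 2|4^{k-j}|^\nu - 1 \quad \text{for } 0 \leq j < k,
\end{equation*}
which is satisfied provided $\nu$ is chosen small enough that $(1-\theta)^{-1} \leq 4^\nu$, i.e. $\nu \geq -\log_4(1-\theta)$. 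A similar choice handles the time direction (using $Q_r$'s scaling exponent $2\sigma/\alpha$), and $\nu$ remains uniform as $\alpha \to 1$ since $\theta$ does.

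Finally, the geometric oscillation decay
\begin{equation*}
\mathrm{osc}_{Q_{4^{-k}}} u \leq 2(1-\theta)^k
\end{equation*}
translates into Hölder continuity by the standard argument: given two points $(x,t),(y,s)$ with parabolic distance $d = |x-y| + |t-s|^{\alpha/(2\sigma)}$, choose $k$ so that $4^{-(k+1)} \leq d < 4^{-k}$; both points lie in a cylinder $Q_{4^{-k}}(x_0,t_0)$ provided $t > 4^{-k\cdot 2\sigma/\alpha}$, which accounts for the $t^\kappa$ in the denominator of the final estimate with $\kappa = -\log_4(1-\theta)$. Reinstating the normalization constant yields the stated bound.
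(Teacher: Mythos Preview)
Your approach is essentially the same as the paper's: normalize, then iterate Lemma~\ref{l:down} via parabolic rescaling to produce nested oscillation bounds $a_k \le u \le b_k$ on $Q_{4^{-k}}$, and read off the H\"older exponent from the geometric decay. The paper carries this out with $b_k - a_k = 2\cdot 4^{-\kappa k}$ rather than your $2(1-\theta)^k$, and this difference matters for the one real gap in your write-up.

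The gap is in how you handle the growth exponent $\nu$. You write that the tail condition ``is satisfied provided $\nu$ is chosen small enough that $(1-\theta)^{-1}\le 4^\nu$, i.e.\ $\nu \ge -\log_4(1-\theta)$.'' First, this is a \emph{lower} bound on $\nu$, not an upper bound, so ``small enough'' is the wrong direction. More importantly, $\nu$ is not yours to choose: it is a fixed universal constant produced by Lemma~\ref{l:down} (in the proof of that lemma, $\nu$ must in fact be \emph{small} so that the spatial and temporal tail errors are each at most $\epsilon_0/4$). There is no reason the fixed $\nu$ from Lemma~\ref{l:down} should satisfy $\nu \ge -\log_4(1-\theta)$; if it does not, your inductive verification of the growth hypotheses for $v_k$ fails.

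The paper avoids this by decoupling the decay rate from $\theta$: it sets $b_k-a_k = 2\cdot 4^{-\kappa k}$ and chooses $\kappa>0$ small enough that simultaneously $\kappa<\nu$ (so the growth check goes through using the previous levels) and $1-\theta \le 4^{-\kappa}$ (so Lemma~\ref{l:down} still closes the induction). With that adjustment your argument is correct and coincides with the paper's.
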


 \begin{proof}
  We first choose $\kappa<\nu$ for $\nu$ as in Lemma \ref{l:down}.
  We consider the rescaled function 
   \[
    v(x,t) = \frac{u(x_0 + t_0^{-2\sigma/\alpha}x,t_0(t+1))}{M + \epsilon_0^{-1} \| f\|_{L^{\infty}}}
   \]
  Notice that $v$ is a solution to 
   \[
    \partial_t^{\alpha} v - Iv = f(x,t)\frac{t_0^\alpha}{M + \epsilon_0^{-1} \| f\|_{L^{\infty}}}
   \]
  in $B_2 \times [-1,0]$. We let $r=1/4$ and the estimate will follow as soon as we show
   \begin{equation}  \label{e:osc2}
    \text{osc}_{Q_{r_k}} v \leq 2 r^{\kappa k}.
   \end{equation}
  \eqref{e:osc2} will be proven by constructing two sequences $a_k \leq v \leq b_k$ in $Q_{r_k}$, $b_k - a_k= 2r^{\kappa k}$ with 
  $a_k$ nondecreasing and $b_k$ nonincreasing. The sequence is constructed inductively. 
  
  Since $|v|\leq 1$ in $B_2 \times [-2,0]$, we can start by choosing some $a_0 \leq \inf v$ and $b_0 \geq \sup v$ so that $b_0 - a_0=2$. Assuming now that
  the sequences have been constructed up to the value $k$  we scale
   \[
    w(x,t) = (v(r^k x,r^{2k\sigma /\alpha}t) -(a_k+b_k)/2)r^{-\kappa k}.
   \] 
  We then have 
   \[
    \begin{aligned}
     |w| \leq 1                 &\quad \text{in } \quad  Q_1 \\
     |w| \leq 2r^{-\kappa k} -1 &\quad \text{in } \quad  Q_{r^{-k}}.
    \end{aligned}
   \]
  and so
   \[
    \begin{aligned}
     |w(x,t)|\leq 2|x|^{\nu}-1  &\quad \text{for } \quad (x,t) \in B_1^c \times [-1,0] \\
     |w(x,t)|\leq 2|t|^{\nu}-1  &\quad \text{for } \quad (x,t) \in B_1 \times (-\infty,-1).
    \end{aligned}
   \]
  Notice also that $w$ has new right hand side bounded by 
   \[
    \epsilon_0 r^{k(\kappa-2\sigma)}
   \]
  which is strictly smaller than $\epsilon_0$ for $\kappa<2\sigma$. 
   For $\kappa$ small enough we can apply Lemma \ref{l:down} to obtain 
   \[
    \text{osc}_{ Q_r}  w \leq 1-\theta. 
   \]
  Then if $\kappa$ is chosen smaller than the $\kappa$
  in Lemma \ref{l:down} and also so that $1-\theta \leq r^{\kappa}$, then this implies
   \[
    \text{osc}_{ Q_{r^{k+1}}} w \leq r^{\kappa(k+1)}
   \]
  so we can find $a_{k+1}$ and $b_{k+1}$ and this finishes the proof.  
 \end{proof}

\section{Higher Regularity in Time}  \label{s:higher}
 As this paper is concerned with the Caputo derivative we will focus on higher regularity in time rather than higher regularity in spatial variables which has 
 already been treated in the literature. 
 
 In this section we fix a smooth cut-off function $\eta$ such that $\eta\equiv 1 $ for $1/2\leq t \leq 1$ and $\eta \equiv 0$ for $t \leq 1/4$. 
  \begin{lemma}  \label{l:byparts}
   Let $0<\gamma\leq 1$ and let $g:(-\infty,1)\to \R$ be such that 
    \[
      \| g\|_{C^{0,\gamma}[1/8,1]} \leq C \quad \text{  and  } \quad |g(t)| \leq C(2+|t|^{\nu})
    \]
   with $\nu <\alpha$. 
   Then 
    \[
     \partial_t^{\alpha} (\eta g) = \eta \partial_t^{\alpha} g + \tilde{g} 
    \]
   where 
    \[
      \| \tilde{g} \|_{C^{0,\gamma}(-\infty,1)} \leq C_{\alpha}
    \] 
   with $C_{\alpha}$ dependent on $\alpha$ but uniform as $\alpha \to 1$. 
  \end{lemma}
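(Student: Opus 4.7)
\emph{Plan of proof.} I would start from the one-sided formulation \eqref{e:fracform} and split the increment inside the integral as
\[
 \eta(t)g(t)-\eta(s)g(s) = \eta(t)\bigl[g(t)-g(s)\bigr] + g(s)\bigl[\eta(t)-\eta(s)\bigr].
\]
The first piece, once divided by $(t-s)^{1+\alpha}$ and integrated against $\alpha/\Gamma(1-\alpha)$, contributes $\eta(t)\,\partial_t^{\alpha}g(t)$. So the natural candidate is
\[
 \tilde{g}(t) := \G\int_{-\infty}^{t}\frac{g(s)\bigl[\eta(t)-\eta(s)\bigr]}{(t-s)^{1+\alpha}}\,ds,
\]
and the task becomes: estimate $\tilde g$ in $C^{0,\gamma}(-\infty,1)$ with a constant uniform as $\alpha\to 1$.

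The first easy observation is that $\tilde g(t)\equiv 0$ for $t\le 1/4$, since $\eta(t)=0$ and $\eta(s)=0$ for all $s\le t\le 1/4$. So it suffices to work on $t\in (1/4,1)$. For the $L^\infty$ bound, I would split $\int_{-\infty}^{t}=\int_{t-1}^{t}+\int_{-\infty}^{t-1}$. On the near piece, use $|\eta(t)-\eta(s)|\le \|\eta'\|_\infty(t-s)$, so the integrand is controlled by $C(t-s)^{-\alpha}$, giving $C/(1-\alpha)$. On the far piece, use $|\eta(t)-\eta(s)|\le 2$ and the growth hypothesis $|g(s)|\le C(2+|s|^{\nu})$ with $\nu<\alpha$; the integral $\int_1^\infty u^{\nu-1-\alpha}\,du$ is controlled by $C/(\alpha-\nu)$. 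The key uniform-in-$\alpha$ fact is that $\alpha/\Gamma(1-\alpha)\sim \alpha(1-\alpha)$ as $\alpha\to 1$, which exactly cancels the $(1-\alpha)^{-1}$ coming from the near piece; $1/(\alpha-\nu)$ stays bounded because $\nu<\alpha$ is fixed strictly away from $1$.

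For the H\"older estimate, fix $t_1<t_2<1$ with $h=t_2-t_1$ and pass to the change of variables $u=t-s$:
\[
 \tilde g(t) = \G\int_{0}^{\infty}\frac{g(t-u)\bigl[\eta(t)-\eta(t-u)\bigr]}{u^{1+\alpha}}\,du.
\]
I would break the difference $\tilde g(t_2)-\tilde g(t_1)$ into $\int_0^{2h}+\int_{2h}^{\infty}$. On $(0,2h)$ estimate each term separately using the Lipschitz bound $|\eta(t_i)-\eta(t_i-u)|\le \|\eta'\|_\infty u$ together with local boundedness of $g$ coming from $\|g\|_{C^{0,\gamma}[1/8,1]}\le C$; this gives a contribution of order $h^{1-\alpha}$ multiplied by $\alpha(1-\alpha)$, i.e.\ of order $h^{1-\alpha}(1-\alpha)$, which is bounded by $Ch^{\gamma}$ for $\gamma\le 1$ since $h<1$ (and remains so as $\alpha\to 1$, where the factor $1-\alpha$ suppresses the potential blowup). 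On $(2h,\infty)$ write
\[
 F(t,u) := g(t-u)\bigl[\eta(t)-\eta(t-u)\bigr]
\]
and estimate $|F(t_2,u)-F(t_1,u)|$ using: (i) the $C^{0,\gamma}$ bound on $g$ to control $|g(t_2-u)-g(t_1-u)|$ on the compact interval $[1/8,1]$; (ii) the smoothness of $\eta$ to control $|\eta(t_2)-\eta(t_1)|\le \|\eta'\|_\infty h$ and $|\eta(t_2-u)-\eta(t_1-u)|\le \|\eta'\|_\infty h$; (iii) the polynomial growth of $g$ to tame the tail for large $u$. This produces bounds of the form $C h^{\gamma} u^{-1-\alpha}(1+|t-u|^\nu)$ which, when integrated over $(2h,\infty)$ and multiplied by $\alpha/\Gamma(1-\alpha)$, yields $C h^\gamma$ uniformly in $\alpha$.

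The main obstacle I anticipate is the bookkeeping of uniform-in-$\alpha$ constants, particularly in the near-singular part of the H\"older estimate: the naive bound produces a factor $h^{1-\alpha}/(1-\alpha)$, and one must observe both that the prefactor $\alpha/\Gamma(1-\alpha)$ kills the $(1-\alpha)^{-1}$ and that $h^{1-\alpha}\le 1$ for $h\le 1$, so the bound can be upgraded to the target H\"older exponent $\gamma$. Everything else is routine splitting into near/far and invoking the hypotheses on $g$ and the smoothness of $\eta$.
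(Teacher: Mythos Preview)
Your identification of $\tilde g$ and the $L^\infty$ bound are fine and match the paper. The gap is in the H\"older estimate on the near piece $(0,2h)$. Estimating each term $\int_0^{2h}\frac{|F(t_i,u)|}{u^{1+\alpha}}\,du$ separately gives
\[
 \frac{\alpha}{\Gamma(1-\alpha)}\int_0^{2h}\frac{C u}{u^{1+\alpha}}\,du
 = \frac{\alpha}{\Gamma(1-\alpha)}\cdot\frac{C(2h)^{1-\alpha}}{1-\alpha}
 = \frac{\alpha}{\Gamma(2-\alpha)}\,C(2h)^{1-\alpha},
\]
so the factor $(1-\alpha)$ from the prefactor and the $(1-\alpha)^{-1}$ from the integration \emph{cancel}; what remains is of order $h^{1-\alpha}$ with a constant uniform in $\alpha$, not $(1-\alpha)h^{1-\alpha}$ as you wrote. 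And $h^{1-\alpha}$ does not control $h^{\gamma}$ when $\gamma>1-\alpha$ (take $\alpha$ fixed, $h\to 0$). Even the quantity $(1-\alpha)h^{1-\alpha}$ would not do the job: for $\gamma=1$ one would need $(1-\alpha)h^{-\alpha}\le C$, which fails as $h\to 0$. The fix is \emph{not} to estimate each term separately on $(0,2h)$ but to estimate the difference there as well: since $|g(t_2-u)-g(t_1-u)|\le Ch^{\gamma}$ on $[1/8,1]$ and $\big|[\eta(t_2)-\eta(t_2-u)]-[\eta(t_1)-\eta(t_1-u)]\big|\le \|\eta''\|_\infty\,u\,h$, one gets $|F(t_2,u)-F(t_1,u)|\le C h^{\gamma}u$, and then the same computation yields $C h^{\gamma}(2h)^{1-\alpha}/\Gamma(2-\alpha)\le C h^{\gamma}$ uniformly.

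For comparison, the paper avoids the near/far dichotomy altogether for the piece supported on $[1/8,1]$: it writes $\eta(t)-\eta(s)=(t-s)\tilde\eta(t,s)$ with $\tilde\eta$ smooth, so the kernel becomes the \emph{integrable} $(t-s)^{-\alpha}$; after translating the first integral by $h$, the H\"older quotient is directly controlled by the $C^{0,\gamma}$ norm of $g_2\tilde\eta$. The tail $s<1/8$ is handled separately by observing that $\eta(s)=0$ there, so that contribution is $\eta(t)\int_{-\infty}^{1/8}\frac{g(s)}{(t-s)^{1+\alpha}}\,ds$, which is $C^1$ in $t$ for $t\ge 1/4$ since the singularity is at distance at least $1/8$. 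Your item (iii) gestures at this but does not actually say how you control $|g(t_2-u)-g(t_1-u)|$ when $t_i-u<1/8$, where no H\"older bound on $g$ is available; you need this extra observation (or the equivalent differentiation-under-the-integral argument).
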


  \begin{proof}
   We explicitly compute 
    \[ 
     \begin{aligned}
      \partial_t^{\alpha} (\eta g) &= \frac{\alpha}{\Gamma(1-\alpha)}\int_{-\infty}^t \frac{(\eta g)(t)-(\eta g)(s)}{(t-s)^{1+\alpha}} ds \\
             &= \eta(t) \frac{\alpha}{\Gamma(1-\alpha)}\int_{-\infty}^t \frac{ g(t)- g(s)}{(t-s)^{1+\alpha}} ds
               + \frac{\alpha}{\Gamma(1-\alpha)}\int_{-\infty}^t \frac{g(s)(\eta(t)-\eta(s))}{(t-s)^{1+\alpha}} ds \\
             &= \eta(t)\partial_t^{\alpha}g(t) +   \frac{\alpha}{\Gamma(1-\alpha)}\int_{-\infty}^t \frac{g(s)(\eta(t)-\eta(s))}{(t-s)^{1+\alpha}} ds \\
     \end{aligned}
    \]
   We now write $g=g_1+g_2$ where $g_1:= g\chi_{\{t<1/8\}}$. We have 
    \[
     \frac{\alpha}{\Gamma(1-\alpha)}\int_{-\infty}^t \frac{g_1(s)(\eta(t)-\eta(s))}{(t-s)^{1+\alpha}} ds
      =\frac{\alpha}{\Gamma(1-\alpha)}\int_{-\infty}^{1/8} \frac{g_1(s)\eta(t)}{(t-s)^{1+\alpha}} ds
    \]
   This is clearly a differntiable function for $t\geq 1/4$ (recall that $\eta(t)\equiv 0$ for $t\leq 1/4$). The H\"older norm
   will depend only on the growth of $|g_1(t)|\leq C(2+|t|^\nu)$. Furthermore, this term clearly goes to zero independent of $g$ as $\alpha \to 1$. 
   
   We now define
    \[
     \tilde{\eta}(t,s):=\frac{\eta(t)-\eta(s)}{(t-s)},
    \] 
   and note that $\tilde{\eta}(t,s)$ is a smooth function. Then
    \[
     \begin{aligned}
      &\frac{1}{|h|^\gamma} \frac{\alpha}{\Gamma(1-\alpha)} 
       \left[\int_{-\infty}^{t+h} \frac{g_2(s)(\eta(t+h)-\eta(s))}{(t+h-s)^{1+\alpha}} ds 
        - \int_{-\infty}^{t} \frac{g_2(s)(\eta(t)-\eta(s))}{(t-s)^{1+\alpha}} ds\right] \\
      &=   \frac{1}{|h|^\gamma} \frac{\alpha}{\Gamma(1-\alpha)} 
       \left[\int_{-\infty}^{t+h} \frac{g_2(s)\tilde{\eta}(t+h,s)}{(t+h-s)^{\alpha}} \ ds 
        - \int_{-\infty}^{t} \frac{g_2(s)\tilde{\eta}(t,s)}{(t-s)^{\alpha}} \ ds\right] \\
      &=\frac{1}{|h|^\gamma} \frac{\alpha}{\Gamma(1-\alpha)} 
       \left[
       \int_{1/8-h}^{t} \frac{g_2(s+h)\tilde{\eta}(t+h,s+h)-g_2(s)\tilde{\eta}(t,s)}{(t-s)^{\alpha}} ds 
        \right] 
     \end{aligned}
    \]
   The last term is bounded by the H\"older norm of $g_2 \tilde{\eta}$ 
   and is uniformly bounded as $\alpha \to 1$. 
  \end{proof}

  \begin{lemma}
   Let $u$ be a solution to \eqref{e:e1} in $[-1,0] \times U$ for some open $U \subset \R^n$. 
   Then $\eta u$ is a solution to \eqref{e:e1} in $(-\infty,0]\times U$ with right hand side 
    \[
     \eta(t)f(t,x) + \frac{\alpha}{\Gamma(1-\alpha)}\int_{-\infty}^t \frac{u(x,s)[\eta(t)-\eta(s)]}{(t-s)^{1+\alpha}}
    \]
  \end{lemma}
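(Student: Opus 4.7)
The plan is to establish, first formally and then in the viscosity sense, the identity
\[
\partial_t^\alpha(\eta u)(x,t) - I(\eta u)(x,t) = \eta(t)\,f(t,x) + g^*(x,t),
\]
where $g^*(x,t) := \frac{\alpha}{\Gamma(1-\alpha)}\int_{-\infty}^t \frac{u(x,s)[\eta(t)-\eta(s)]}{(t-s)^{1+\alpha}}\,ds$. At the formal level, adding and subtracting $\eta(t)\,u(x,s)$ inside the numerator of the Caputo integral splits $\partial_t^\alpha(\eta u)(x,t) = \eta(t)\,\partial_t^\alpha u(x,t) + g^*(x,t)$, exactly as in the product-rule manipulation in the proof of Lemma \ref{l:byparts}. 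For the nonlocal spatial operator, $\eta$ depends only on $t$, so $(\eta u)(x+y,t) - (\eta u)(x,t) = \eta(t)[u(x+y,t) - u(x,t)]$; since $\eta(t) \geq 0$, the factor pulls out of the integral and also out of $\sup_i\inf_j$, giving $I(\eta u)(x,t) = \eta(t)\,Iu(x,t)$. Combining these two computations and invoking $\partial_t^\alpha u - Iu = f$ on $[-1,0]\times U$ yields the identity classically.

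To handle viscosity, suppose $\phi$ touches $\eta u$ from above at $(x_0,t_0)$ in a cylinder $\C = [t_1,t_2]\times B_r(x_1) \subset (-\infty,0]\times U$. When $\eta(t_0) > 0$, shrink $\C$ so that $\eta > 0$ on $[t_1,t_2]$, and set $\tilde\phi := \phi/\eta$; then $\tilde\phi$ touches $u$ from above at $(x_0,t_0)$. Extending by $u$ outside $\C$ produces $\tilde v$ satisfying the viscosity inequality $\partial_t^\alpha \tilde v - I\tilde v \leq f$ at $(x_0,t_0)$. The corresponding $v := \eta\tilde v$ coincides with $\phi$ on $\C$ and with $\eta u$ outside, and the identity $Iv(x_0,t_0) = \eta(t_0)\,I\tilde v(x_0,t_0)$ combined with the product-rule splitting applied to $v$ yields the desired viscosity inequality for $\eta u$ up to a correction
\[
E := \frac{\alpha}{\Gamma(1-\alpha)}\int_{t_1}^{t_0} \frac{[\tilde\phi(x_0,s)-u(x_0,s)][\eta(t_0)-\eta(s)]}{(t_0-s)^{1+\alpha}}\,ds,
\]
arising because the tail of $v$ over $\C$ sees $\tilde\phi$ rather than $u$. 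The case $\eta(t_0) = 0$ is trivial since $\eta u$ vanishes in a neighborhood of $t_0$, and the supersolution case is entirely symmetric.

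The main obstacle is the control of $E$. Since $\eta$ is smooth, $|\eta(t_0)-\eta(s)| = O(t_0-s)$, and since $\tilde\phi - u$ is bounded and nonnegative on $\C$, the integrand is dominated by $C(t_0-s)^{-\alpha}$, which is integrable in view of $\alpha < 1$. Consequently $|E| \to 0$ as $t_1 \uparrow t_0$, and because the viscosity touching from above is preserved under shrinking $\C$, passing to this limit removes $E$ and produces the viscosity inequality at $(x_0,t_0)$ with the asserted right-hand side $\eta(t)\,f(t,x) + g^*(x,t)$.
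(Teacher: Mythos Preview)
Your argument is correct and reaches the same conclusion, but the route differs from the paper's in one essential respect. The paper does not introduce a correction term at all: it invokes the fact (cited from \cite{a17}) that whenever a smooth $\psi$ touches $u$ from above at $(x_0,t_0)$, the Caputo derivative $\partial_t^\alpha u(x_0,t_0)$ can be evaluated classically and satisfies $\partial_t^\alpha u(x_0,t_0)-I\psi(x_0,t_0)\le f(x_0,t_0)$. With that in hand the product rule applies directly to $\eta u$ (not to the test function), and since the test object $v$ for $\eta u$ satisfies $v\ge \eta u$ with equality at the contact point, one has $\partial_t^\alpha v\le \partial_t^\alpha(\eta u)$, which closes the inequality with no error term.

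Your approach instead stays entirely at the level of test functions: you divide by $\eta$, apply the viscosity inequality for $u$, and are left with the correction
\[
E=\frac{\alpha}{\Gamma(1-\alpha)}\int_{t_1}^{t_0}\frac{[\tilde\phi(x_0,s)-u(x_0,s)][\eta(t_0)-\eta(s)]}{(t_0-s)^{1+\alpha}}\,ds,
\]
which you then kill by shrinking $[t_1,t_0]$. This is legitimate, but your last sentence skips a step: the viscosity definition in Section~\ref{s:viscosity} requires the inequality for the \emph{given} cylinder $\C$, not merely for arbitrarily thin ones. What saves you is the monotonicity you used implicitly: if $\C'\subset\C$ is obtained by raising $t_1$ to $t_1'$, then the associated $v'$ agrees with $v$ at time $t_0$ in space (so $Iv=Iv'$) and satisfies $v'\le v$ in the past (so $\partial_t^\alpha v\le \partial_t^\alpha v'$). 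Hence the bound for $(\phi,\C')$ transfers to $(\phi,\C)$, and letting $t_1'\uparrow t_0$ removes $E$. You should state this explicitly. The trade-off is that your argument is self-contained (no appeal to \cite{a17}), while the paper's is shorter once that external fact is granted.
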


  \begin{proof}
    If $t_0 \notin $ supp $\eta$ this is immediately clear. Otherwise $\eta \psi$ touches $\eta u$ from above if and only if $\psi$ touches $u$ from above. Suppose
    $\eta \psi$ touches $\eta u$ from above at a point $(x,t)$. As explained in \cite{a17}, if $\psi$ touches $u$ from above at $(x,t)$, then $\partial_t^{\alpha} u$
    can be evaluated classically at $(x,t)$. Furthermore, it follows that $\partial_t^{\alpha} u(x,t) - I \psi(x,t)\leq f(x,t)$. Then
    \[
     \begin{aligned}
      &\partial_t^{\alpha} \eta u(x,t) - I \eta \psi(x,t)  \\
         &\qquad= \eta(t) [\partial_t^{\alpha} u(x,t)- I\psi(x,t)]
           + \frac{\alpha}{\Gamma(1-\alpha)}\int_{-\infty}^t \frac{u(x,s)[\eta(t)-\eta(s)]}{(t-s)^{1+\alpha}} \ ds. 
     \end{aligned}
    \]
    It then follows that 
    \[
     \partial_t^{\alpha} \eta\psi(x,t) - I \eta \psi(x,t) \leq \eta(t) f(x,t) + \frac{\alpha}{\Gamma(1-\alpha)}\int_{-\infty}^t \frac{u(x,s)[\eta(t)-\eta(s)]}{(t-s)^{1+\alpha}} \ ds. 
    \]
     When $\eta \psi$ touches from below the proof is similar.
  \end{proof}

  \begin{theorem}   \label{t:higher}
   Let $u$ be a solution to  
    \begin{equation}  \label{e:linear}
     \partial_t^{\alpha} u -  \int_{\R^n}\frac{u(x+y)-u(x)}{|y|^{n+2\sigma}} a^{ij}(x,y) = f
    \end{equation}
     in $\R^n \times [-1,T]$ with right hand side $f \in C^{k,\beta}$. 
   Assume 
    \[
     u(x,t) \leq 2(|t|-1)^{\nu} \quad \text{ for } \nu <\alpha \quad \text{ and } \quad t<0.
    \]
   Let $-1<t_0$. We label $\gamma = 2\sigma \kappa/\alpha$, and recall that $0<\gamma<1$. Let $\epsilon>0$. Then there exists
   a constant $C$ depending on $t_0,k,n, \lambda, \Lambda, \sigma,\nu, \epsilon, \alpha, $ but uniform as $\alpha \to 1$ such that 
    \[
     \| u \|_{C^{k,\beta+\kappa}(\R^n \times [t_0,T-\epsilon])} \leq C 
    \]
  \end{theorem}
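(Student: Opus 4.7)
The plan is a bootstrap argument via incremental quotients in time, exploiting that $a^{ij}(x,y)$ is independent of $t$ in the linear equation \eqref{e:linear}. First I would localize in time: multiplying by the cut-off $\eta$ and invoking the preceding lemma, $\eta u$ satisfies \eqref{e:linear} on $(-\infty, 1] \times \R^n$ with right-hand side $\eta f + \tilde u$, where $\tilde u$ is a commutator controlled in $C^{0,\gamma}$ uniformly in $\alpha$ by Lemma \ref{l:byparts}, once an appropriate H\"older norm of $u$ on $[1/8, 1]$ is known. This converts the problem into an interior estimate for a globally-in-time defined problem with only a lower-order correction on the right-hand side, and the polynomial tail bound $u(x,t) \leq 2(|t|-1)^\nu$ (with $\nu < \alpha$) is used to verify the growth assumption required by Lemma \ref{l:byparts}.

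The main step is the incremental-quotient argument. Because $\partial_t^\alpha$ commutes with time translations and $a^{ij}$ is $t$-independent, for small $h>0$ the translate $u_h(x,t) := u(x, t+h)$ solves \eqref{e:linear} with right-hand side $f(x, t+h)$; hence $w_h := u_h - u$ solves \eqref{e:linear} with right-hand side $f(\cdot,\cdot+h) - f$. Cutting off with $\eta$ and normalizing $v_h := \eta w_h / h^{\kappa_0}$ for an appropriate exponent $\kappa_0 \leq \beta$ (initially $\kappa_0 = \kappa$ from Theorem \ref{t:holder}), $v_h$ satisfies \eqref{e:linear} with right-hand side bounded by $\|f\|_{C^{0,\beta}} h^{\beta-\kappa_0}$ plus a commutator, all uniformly in $h$. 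Applying Theorem \ref{t:holder} to $v_h$ yields $\|v_h\|_{C^{0,\kappa}_t} \leq C$ uniformly in $h$, which by the standard finite-difference characterization of H\"older spaces implies $u \in C^{0,\kappa_0+\kappa}_t$ on a smaller sub-cylinder. Iterating (using higher-order finite differences to cross integer H\"older exponents and to extract each additional derivative for $k \geq 1$) produces the claimed $C^{k,\beta+\kappa}$ bound. The passage to the limit as $h \to 0$ at each step is justified by Lemma \ref{l:limit}.

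The main obstacle is the circular dependence in Lemma \ref{l:byparts}: the $C^{0,\gamma}$ bound on the commutator term requires a H\"older bound on the function it multiplies on $[1/8, 1]$, which is stronger than what we have at the start of each step. This is resolved inductively, since the H\"older bound established at iteration $k$ supplies the input needed to run Lemma \ref{l:byparts} at iteration $k+1$; the base of the induction is the estimate furnished by Theorem \ref{t:holder} applied to $\eta u$ itself. Uniformity as $\alpha \to 1$ is maintained throughout because Theorem \ref{t:holder}, Lemma \ref{l:byparts}, and the Mittag--Leffler estimates of Section \ref{s:caputo} all carry constants uniform in $\alpha$.
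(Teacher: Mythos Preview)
Your proposal is correct and takes essentially the same approach as the paper: localize in time with $\eta$, use that $a^{ij}$ is $t$-independent so time-difference quotients solve the same equation, apply Theorem~\ref{t:holder} to the normalized quotients, and iterate. The only cosmetic difference is in crossing integer thresholds---you mention higher-order finite differences, whereas the paper iterates up to $C^{0,1}$, then shows the first-order quotient $(w(\cdot,t+h)-w(\cdot,t))/h$ is a bounded solution with uniformly $C^{0,\gamma}$ estimates and invokes Lemma~\ref{l:limit} to pass to the limit $h\to 0$ and conclude $w\in C^{1,\gamma}$.
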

  
   \begin{remark}
    We consider solutions in all of $\R^n \times [-1,T]$. This will ensure sufficiently regular ``boundary conditions''. 
    The method in the following proof would apply to solutions in bounded domains provided that the ``boundary'' data is sufficiently regular. Otherwise, solutions
    need not have higher regularity (see \cite{cl142}).
   \end{remark}
   
  \begin{proof}
   As before we rescale so that $t_0$ is distance one from $-1$. This is where the dependence on $t_0$ comes. 
   We multiply by $\eta$ and our new solution $w = \eta v$ has a new right hand side $g \in C^{0,\kappa}$. We label $\gamma = \alpha/(2\sigma)$, and note that
   for every fixed $x$
    \[
     u(t,x) \in C^{0,\gamma}(-\infty,T) 
    \] 
    We now use the ideas of a difference quotient shown
   in \cite{cc95}. For $h>0$  
    \[
     \frac{w(x,t+h)-w(x,t)}{h^{\gamma}}
    \] 
   is a bounded solution in $\R^n\times (-\infty, T-h)$. Then from the proof in \cite{cc95}, for every fixed $x$,
    \[
     \| w \|_{C^{0,2\gamma}(\{x\} \times (-\infty, T-\epsilon_2))} \leq C
    \]
   with $C$ depending only on the $C^{0,\gamma}$ norm of $w$ and $\epsilon_2$. Then $w$ satisfies \eqref{e:linear} 
   with right hand side in $C^{0,2\gamma}$. This can be 
   repeatedly finitely many $m$ times to prove
    \[
     \| w \|_{C^{0,m\gamma}(\R^n \times (-\infty, T-\epsilon_m))} \leq C_m
    \]
   as long as $f \in C^{0,m\gamma}$. If $f$ is Lipschitz then we can repeat the process finitely many times to obtain $w \in C^{0,1}$. 
   Then taking the difference quotient one more time
    \[
     w_h := \frac{w(x,t+h)-w(x,t)}{h}
    \]
   is a solution to \eqref{e:linear} with bounded right hand side. Hence by Theorem \ref{t:holder}, $w_h$ is uniformly H\"older continuous and
   so a limit solution $u_0 \in C^{0,\gamma}$ exists from Lemma \ref{l:limit}. Hence, $h \to 0$  we conclude that for every fixed $x$
    \[
     w(x,t) \in C^{1,\gamma}((-\infty, T- \epsilon_k)).
    \]
   We can then repeat the process as long as the original right hand side $f$ is regular enough.   
  \end{proof}

 We remark that as in \cite{cc95} and \cite{s11} once one proves a uniqueness theorem of Jensen type, one could prove $C^{1,\gamma}$ regularity in
 time for solutions of the nonlinear equation \eqref{e:e1} provided the kernels $K$ were time-independent.

\bibliographystyle{amsplain}
\bibliography{refnondiv}

\end{document}